\newtheorem{theorem}{Theorem}
\begin{document}

\title{Congruence properties of prime sums and Bernoulli polynomials}

\author[$\dagger$]{Jean-Christophe {\sc Pain}$^{1,2,}$\footnote{jean-christophe.pain@cea.fr}\\
\small
$^1$CEA, DAM, DIF, F-91297 Arpajon, France\\
$^2$Universit\'e Paris-Saclay, CEA, Laboratoire Mati\`ere en Conditions Extr\^emes,\\ 
F-91680 Bruy\`eres-le-Ch\^atel, France
}

\date{}

\maketitle

\begin{abstract}
In this article, we derive a congruence property of particular sum rules involving prime numbers. The resulting expression involves Bernoulli numbers and polynomials, for which we obtain, as a consequence, a general congruence relation as well. 
\end{abstract}

\section{Introduction}

Prime sums are of great interest \cite{Zhao2010}, whether for checking calculations, carrying out primality tests, or for cryptography purposes. Some of them are quite surprising, such as (throughout the paper, $p$ is a prime number):
\begin{equation}
\sum_{i,j=1}^{p-1}\left\lfloor\frac{ij}{p}\right\rfloor=\frac{(p-1)^3-(p-1)^2}{4}=\frac{(p-2)(p-1)^2}{4}, 
\end{equation}
or also \cite{Doster1993}:
\begin{equation}
\sum_{k=1}^{(p-1)(p-2)}\left\lfloor(kp)^{1/3}\right\rfloor=\frac{(3p-5)(p-2)(p-1)}{4}. 
\end{equation}
Recently, we obtained a family of sum rules \cite{Pain2023}, generalizing previous works \cite{Doster1993,Doster1997,Koshy1999}. In section \ref{sec1}, we introduce a variant generalizing our sums, for any value of the upper bound of the summation:
\begin{equation}
    \sum_{k=1}^{p-r}\left\lfloor\frac{k^p}{p}\right\rfloor,
\end{equation}
and study its convergence properties. The corresponding relation involves Bernoulli polynomials $B_p(x)$ and numbers $B_p$:
\begin{equation}
    \sum_{k=1}^{p-r}\left\lfloor\frac{k^p}{p}\right\rfloor\equiv \frac{B_{p+1}(r)-B_{p+1}(0)}{p(p+1)}+\frac{(r-1-p)(p-r)}{2p}
\;\;[p],
\end{equation}
which is the first main result of the present work. The congruences of Bernoulli numbers and polynomials were investigated by many authors, such as Sun \cite{Sun1997}. They receive a particular attention, in particular because they are strongly related to the Wilson and Wolstenholme theorems  \cite{Lucas1877,Wolstenholme1892,McIntosh1995,Sun,Hardy2008}. The latter reads
\begin{equation}
    \binom{2p-1}{p-1}=1\;\;[p^3].
\end{equation}
Very recently, Levaillant presented a new proof of Glaisher's formula \cite{Glaisher1900} ($B_p$ is a Bernoulli number) concerning Wilson's theorem modulo $p^2$ \cite{Levaillant2024}:
\begin{equation}
    (p-1)!\equiv p~B_{p-1}-p\;\;[p^2].    
\end{equation}
Her proof relies on $p$-adic numbers and Faulhaber's formula for the sums of powers, as well as properties of Faulhaber's coefficients obtained by Gessel and Viennot \cite{Gessel1989}. Levaillant also proposed  a simpler proof than Sun's one regarding a formula for $(p-1)!$ modulo $p^3$:
\begin{equation}
    (p-1)!\equiv -\frac{p~B_{p-1}}{p-1}+\frac{p~B_{2p-2}}{2(p-1)}-\frac{1}{2}\left(\frac{p~B_{p-1}}{p-1}\right)^2\;\;[p^3].    
\end{equation}

In section \ref{sec2}, we present a congruence relation involving  Bernoulli polynomials, which reads
\begin{equation}
\frac{B_{p+1}(p+1)+B_{p+1}(p)-2B_{p+1}}{p(p+1)}\equiv p^{p-1}\;\;[p].
\end{equation}
and constitutes the second main result of the present work.

\section{A family of prime sums}\label{sec1}

\begin{theorem}

Let $p$ be an odd prime number. Then we have
\begin{equation}
    \sum_{k=1}^{p-r}\left\lfloor\frac{k^p}{p}\right\rfloor\equiv \frac{B_{p+1}(r)-B_{p+1}(0)}{p(p+1)}+\frac{(r-1-p)(p-r)}{2p}
\;\;[p].
\end{equation}

%\begin{equation}
%    \sum_{k=1}^{p-2}\left[\frac{k^p}{p}\right]\equiv\frac{p(3-p)}{p^2}
%\end{equation}

\end{theorem}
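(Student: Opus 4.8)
The plan is to convert the floor sum into an ordinary power sum via Fermat's little theorem, rewrite that power sum with Faulhaber's formula so that $B_{p+1}$ appears, and then verify that the difference between the two sides of the claimed identity collapses to an integer power sum that is visibly divisible by $p^2$. Concretely, I would first restrict to $1\le r\le p-1$, which is the range in which the left-hand side is a genuine floor sum (the remaining values of $r$ produce empty sums and are exactly the trivial cases needed for the second main result). For $1\le k\le p-1$ Fermat's little theorem gives $k^p\equiv k\;[p]$ with $0<k<p$, hence $\left\lfloor k^p/p\right\rfloor=(k^p-k)/p$, so that
\[
\sum_{k=1}^{p-r}\left\lfloor\frac{k^p}{p}\right\rfloor=\frac1p\left(\sum_{k=1}^{p-r}k^p-\frac{(p-r)(p-r+1)}{2}\right).
\]
Applying $\sum_{k=1}^{n}k^p=\big(B_{p+1}(n+1)-B_{p+1}(0)\big)/(p+1)$ with $n=p-r$ turns this into
\[
\sum_{k=1}^{p-r}\left\lfloor\frac{k^p}{p}\right\rfloor=\frac{B_{p+1}(p-r+1)-B_{p+1}(0)}{p(p+1)}-\frac{(p-r)(p-r+1)}{2p}.
\]

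Next I would subtract the announced right-hand side. The two $B_{p+1}(0)$ contributions merge into $\big(B_{p+1}(p-r+1)-B_{p+1}(r)\big)/\big(p(p+1)\big)$, while the elementary terms merge into $-(p-r)\big[(p-r+1)+(r-1-p)\big]/(2p)$, which vanishes because the bracket is identically zero. Writing $S_p(n)=\sum_{k=0}^{n-1}k^p=\big(B_{p+1}(n)-B_{p+1}(0)\big)/(p+1)$, the quantity that must be shown to be $\equiv0\;[p]$ is thus $\frac1p\big(S_p(p-r+1)-S_p(r)\big)=\frac1p\big(\sum_{k=1}^{p-r}k^p-\sum_{k=1}^{r-1}k^p\big)$; equivalently, the whole theorem reduces to the integer congruence
\[
\sum_{k=1}^{p-r}k^p-\sum_{k=1}^{r-1}k^p\equiv0\;[p^2].
\]

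To prove this last congruence, split off the top block and substitute $k=p-j$, giving $\sum_{k=1}^{p-r}k^p=\sum_{k=1}^{p-1}k^p-\sum_{j=1}^{r-1}(p-j)^p$. Expanding $(p-j)^p$ by the binomial theorem, every term is divisible by $p^2$ except $(-j)^p=-j^p$ (here it matters that $p$ is odd), so $(p-j)^p\equiv-j^p\;[p^2]$; the two $r$-dependent sums then cancel and one is left with $\sum_{k=1}^{p-1}k^p\;[p^2]$. Applying the same congruence to the pairing $k\leftrightarrow p-k$ yields $k^p+(p-k)^p\equiv0\;[p^2]$, whence $\sum_{k=1}^{p-1}k^p=\sum_{k=1}^{(p-1)/2}\big(k^p+(p-k)^p\big)\equiv0\;[p^2]$, which finishes the argument.

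The step I expect to require the most care is the bookkeeping with powers of $p$: dividing by $p$ in the floor identity is precisely what forces the auxiliary congruences to hold modulo $p^2$ rather than modulo $p$, and one must keep track of which sums are integers so that the reductions are legitimate, and handle the small values of $r$ and the empty-sum conventions cleanly. By contrast, the arithmetic heart is only the Wolstenholme-type fact $\sum_{k=1}^{p-1}k^p\equiv0\;[p^2]$, and the pleasant coincidence is that the polynomial-in-$r$ contributions on the two sides cancel identically, so no information about $B_{p+1}$ beyond Faulhaber's formula is needed.
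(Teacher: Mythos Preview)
Your proof is correct and follows essentially the same route as the paper: both use Fermat's little theorem to replace $\lfloor k^p/p\rfloor$ by $(k^p-k)/p$, both rely on the binomial identity $(p-j)^p\equiv -j^p\;[p^2]$ and the pairing $k\leftrightarrow p-k$ to obtain $\sum_{k=1}^{p-r}k^p\equiv\sum_{k=1}^{r-1}k^p\;[p^2]$, and both invoke Faulhaber's formula to introduce $B_{p+1}(r)-B_{p+1}(0)$. The only cosmetic difference is the order of the steps---you apply Faulhaber to the left-hand side first and then check that the elementary terms cancel, whereas the paper works modulo $p^2$ first and applies Faulhaber at the end---and your explicit verification that the non-Bernoulli pieces cancel identically, together with the remark on the admissible range of $r$, is in fact tidier than the paper's exposition.
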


\begin{proof}

According to the Fermat little theorem, we have
\begin{equation}
    n^p\equiv n\;\;[p]
\end{equation}
and thus $n^p=n+l p$,where $l$ is an integer, which means that $(n^p-n)/p=l$ and $n^p/p=l+n/p$, with $1\leq n\leq p-1$ and thus
\begin{equation}
\left\lfloor\frac{n^p}{p}\right\rfloor=\frac{n^p-n}{p}.  
\end{equation}
The sum becomes
\begin{equation}
\sum_{k=1}^{p-r}\frac{(k^p-k)}{p}=\frac{1}{p}\sum_{k=1}^{p-1}(k^p-k).    
\end{equation}
We have, for any integer $j$:
\begin{equation}
j^p+(p-j)^p=j^p+\sum_{q=0}^{p-j}\binom{p}{q}p^q(-j)^{p-q}=\sum_{q=1}^{p-j}\binom{p}{q}p^q(-j)^{p-q}+j^p(1+(-1)^p),
\end{equation}
where $\binom{p}{q}$ is the usual binomial coefficient. This gives
\begin{equation}
j^p+(p-j)^p=\sum_{q=1}^{p-j}\binom{p}{q}p^q(-j)^{p-q},
\end{equation}
since $p$ is odd. We have
\begin{equation}
    p!=k!(p-k)!\binom{p}{k}
\end{equation}
and thus $p$ divides $k!(p-k)!\binom{p}{k}$. But $p$ does not divide $k!$ (otherwise $p$ divides one of the factors of $k!$ but they are all $<p$). In the same way, $p$ does not divide $(p-k)!$, thus, according to Euclide's lemma, $p$ divides $\binom{p}{k}$. Since $p$ divides also $p^q$ for $q\geq 1$, one has
\begin{equation}
j^p+(p-j)^p\equiv 0\;\; [p^2]
\end{equation}
which gives, for $j$ varying from 1 to $(p-1)/2$:
\begin{align}
&1^p+(p-1)^p\equiv 0\;\; [p^2]\\
&2^p+(p-2)^p\equiv 0\;\; [p^2]\\
&\vdots\\
&\left(\frac{p-1}{2}\right)^p+\left(\frac{p+1}{2}\right)^p\equiv 0\;\;[p^2]\\
\end{align}
and consequently
\begin{equation}
    \sum_{k=1}^{p-r}k^p\equiv -(p-1)^p-(p-2)^p-\cdots-(p-r+1)^p\;\;[p^2]
\end{equation}
or
\begin{equation}
    \sum_{k=1}^{p-r}k^p\equiv -(-1-2^p-\cdots -(r-1)^p)\;\;[p^2]\equiv \frac{B_{p+1}(r)-B_{p+1}(0)}{p+1}\;\;,
\end{equation}
and 
\begin{equation}
    \sum_{k=1}^{p-r}(k^p-k)\equiv \frac{B_{p+1}(r)-B_{p+1}(0)}{p+1}-\frac{(1+p-r)(p-r)}{2}\;\;[p^2].
\end{equation}
We finally have
\begin{equation}
    \sum_{k=1}^{p-r}\left\lfloor\frac{k^p}{p}\right\rfloor\equiv \frac{B_{p+1}(r)-B_{p+1}(0)}{p(p+1)}+\frac{(r-1-p)(p-r)}{2p}\;\;[p],
\end{equation}
which completes the proof.

\end{proof}

As particular cases, we get

\begin{equation}\label{un}
    \sum_{k=1}^{p-1}\left\lfloor\frac{k^p}{p}\right\rfloor\equiv\frac{(p+1)}{2}\;\;[p]
\end{equation}
or also
\begin{equation}
    \sum_{k=1}^{p-2}\left\lfloor\frac{k^p}{p}\right\rfloor\equiv \frac{1}{p}-\frac{(p-1)(p-2)}{2p}\;\;[p]\equiv \frac{(3-p)}{2}\;\;[p].
\end{equation}

\section{Connection with sum rules and congruence of Bernoulli numbers and polynomials}\label{sec2}

Let $p$ be a prime, and let us consider the sum
\begin{equation}\label{def}
\mathscr{S}_q(p)=\sum_{k=1}^{p-1}\left\lfloor\frac{k^{2q+1}}{p}\right\rfloor,
\end{equation}
for $q\in\mathbb{N}$. We obtained, in a previous work \cite{Pain2023}:
\begin{align}\label{final}
\mathscr{S}_q(p)=\frac{(p-1)(p^{2q}-1)}{2}+\frac{1}{2}\sum_{r=1}^{2q}\frac{(-1)^r}{r+1}\binom{2q+1}{r}\sum_{l=0}^{r}\binom{r+1}{l}B_l~p^{2q+1-l}.
\end{align}
From the definition of Bernoulli polynomials:
\begin{equation}
B_n(x)=\sum_{k=0}^n\binom{n}{k}B_k~x^{n-k},
\end{equation}
one gets
\begin{equation}
\mathscr{S}_q(p)=\frac{B_{2q+2}(-p)+B_{2q+2}(p)-2B_{2q+2}}{2p(2q+2)}-\frac{(p^{2q}+p-1)}{2}.
\end{equation}
Using $B_{2q+2}(-p)=B_{2q+2}(p+1)$, one has
\begin{align}
\mathscr{S}_q(p)=\frac{B_{2q+2}(p+1)+B_{2q+2}(p)-2B_{2q+2}}{2p(2q+2)}-\frac{(p^{2q}+p-1)}{2}
\end{align}
and since $B_{2q+2}(1)=B_{2q+2}(0)=B_{2q+2}$, one can write
\begin{equation}
\mathscr{S}_q(p)=\frac{\left(B_{2q+2}(p+1)-B_{2q+2}(1)\right)+\left(B_{2q+2}(p)-B_{2q+2}(0)\right)}{2p(2q+2)}-\frac{(p^{2q}+p-1)}{2}.
\end{equation}
As an example, in the case $q=1$, one recovers the result given in Refs. \cite{Mathe,Doster1993}:

\begin{equation}
\mathscr{S}_1(p)=\sum_{k=1}^{p-1}\left\lfloor\frac{k^3}{p}\right\rfloor=\frac{(p-1)(p-2)(p+1)}{4}
\end{equation}
and in the cases $q=2$ and 3, one obtains

\begin{equation}
\mathscr{S}_2(p)=\sum_{k=1}^{p-1}\left\lfloor\frac{k^5}{p}\right\rfloor=\frac{1}{12}(p-2)(p-1)(p+1)(2p^2-2 p+3),
\end{equation}
and
\begin{equation}
\mathscr{S}_3(p)=\sum_{k=1}^{p-1}\left\lfloor\frac{k^7}{p}\right\rfloor=\frac{1}{24}(p-2)(p-1)(p+1)(3p^4-6p^3+5 p^2-2 p+6).
\end{equation}
Let $p$ be a prime, and let us consider the particular case where $2q+1=p$. Equation (\ref{def}) becomes:
\begin{equation}\label{defn}
\mathscr{T}(p)=\sum_{k=1}^{p-1}\left\lfloor\frac{k^{p}}{p}\right\rfloor.
\end{equation}
Setting $2q+1=p$ in Eq. (\ref{final}) yields
\begin{align}\label{finaln}
\mathscr{T}(p)=\frac{(p-1)(p^{p-1}-1)}{2}+\frac{1}{2}\sum_{r=1}^{p-1}\frac{(-1)^r}{r+1}\binom{p}{r}\sum_{l=0}^{r}\binom{r+1}{l}B_l~p^{p-l},
\end{align}
or equivalently
\begin{equation}
\mathscr{T}(p)=\frac{B_{p+1}(-p)+B_{p+1}(p)-2B_{p+1}}{2p(p+1)}-\frac{(p^{p-1}+p-1)}{2}.
\end{equation}
Using $B_{p+1}(-p)=B_{p+1}(p+1)$, one has
\begin{align}\label{inte}
\mathscr{T}(p)=\frac{B_{p+1}(p+1)+B_{p+1}(p)-2B_{p+1}}{2p(p+1)}-\frac{(p^{p-1}+p-1)}{2}
\end{align}
and since $B_{p+1}(1)=B_{p+1}(0)=B_{p+1}$, one gets
\begin{equation}
\mathscr{T}(p)=\frac{\left(B_{p+1}(p+1)-B_{p+1}(1)\right)+\left(B_{p+1}(p)-B_{p+1}(0)\right)}{2p(p+1)}-\frac{(p^{p-1}+p-1)}{2}.
\end{equation}
\begin{theorem}

We have the following congruence property
\begin{equation}
\frac{B_{p+1}(p+1)+B_{p+1}(p)-2B_{p+1}}{p(p+1)}\equiv p^{p-1}\;\;[p].
\end{equation}

\end{theorem}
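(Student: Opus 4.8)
The plan is to combine the two expressions already derived for $\mathscr{T}(p)=\sum_{k=1}^{p-1}\lfloor k^p/p\rfloor$. On the one hand, Eq.~(\ref{inte}) gives
\[
\mathscr{T}(p)=\frac{B_{p+1}(p+1)+B_{p+1}(p)-2B_{p+1}}{2p(p+1)}-\frac{p^{p-1}+p-1}{2}.
\]
On the other hand, Theorem~1 with $r=1$ (i.e. Eq.~(\ref{un})) gives $\mathscr{T}(p)\equiv (p+1)/2 \equiv 1/2 \pmod p$. Equating the two and isolating the Bernoulli term,
\[
\frac{B_{p+1}(p+1)+B_{p+1}(p)-2B_{p+1}}{p(p+1)}\equiv 2\mathscr{T}(p)+p^{p-1}+p-1 \equiv (p+1)+p^{p-1}+p-1 \equiv p^{p-1}\pmod p,
\]
since $2\mathscr{T}(p)\equiv p+1$ and the stray $p+p-1$ telescopes to $\equiv -1\pmod p$ against the $+(p+1)$ — all the non-$p^{p-1}$ terms cancel mod $p$. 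So the identity is essentially a bookkeeping consequence of the two results already in hand; the one point that needs a word of justification is that the quantity $\bigl(B_{p+1}(p+1)+B_{p+1}(p)-2B_{p+1}\bigr)/(p(p+1))$ is genuinely a $p$-adic integer (equivalently an integer up to a denominator prime to $p$), so that reducing it mod $p$ makes sense.

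First I would record that $2\mathscr{T}(p)$ is an even integer and, by Eq.~(\ref{un}), $2\mathscr{T}(p)\equiv p+1\pmod{2p}$, hence in particular $\equiv 1\pmod p$. Next I would rearrange Eq.~(\ref{inte}) to write the Bernoulli combination as $p(p+1)\bigl(\mathscr{T}(p)+\tfrac{p^{p-1}+p-1}{2}\bigr)\cdot\tfrac{1}{p(p+1)}$ — i.e. the combination equals $2\mathscr{T}(p)+p^{p-1}+p-1$ exactly, as an identity of rational numbers (this is just clearing the denominator in (\ref{inte})). This already shows the left-hand side is an integer, removing any $p$-adic worry. Then I would reduce that integer mod $p$: $2\mathscr{T}(p)\equiv 1$, $p-1\equiv -1$, $p\equiv 0$, so the sum is $\equiv 1 + p^{p-1} - 1 = p^{p-1}\pmod p$, which is the claim. (Of course $p^{p-1}\equiv 0\pmod p$ for $p$ prime, so the statement is equivalent to the combination being divisible by $p$; keeping the $p^{p-1}$ form is presumably for stylistic parallelism with the second main result as stated in the introduction.)

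The main obstacle is essentially nonexistent at the level of difficulty — it is purely a matter of making sure the algebraic identity $\bigl(B_{p+1}(p+1)+B_{p+1}(p)-2B_{p+1}\bigr) = p(p+1)\bigl(2\mathscr{T}(p)+p^{p-1}+p-1\bigr)/2$ is used in exact rational form before any reduction, rather than manipulating congruences of non-integers. The only genuine content has already been done in proving Theorem~1 (the Fermat-little-theorem floor evaluation and the pairing $j^p+(p-j)^p\equiv 0\pmod{p^2}$) and in the earlier derivation of Eq.~(\ref{inte}) from the author's previous sum-rule (\ref{final}); here one is just specializing $r=1$ and reading off the consequence. I would therefore present the proof in three short lines: cite (\ref{inte}), cite (\ref{un}), subtract and reduce mod $p$.
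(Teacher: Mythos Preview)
Your proposal is correct and follows exactly the same approach as the paper: combine the exact identity (\ref{inte}) for $\mathscr{T}(p)$ with the congruence (\ref{un}) from Theorem~1 at $r=1$, then rearrange and reduce modulo $p$. Your added remark that the rearranged equation shows the Bernoulli combination is an honest integer (so the mod~$p$ reduction is legitimate) is a point the paper leaves implicit, but otherwise the arguments coincide.
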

\begin{proof}
    
Using Eq. (\ref{un}), which is a particular case of Theorem 1 and expression (\ref{inte}), one finds
\begin{equation}
\frac{B_{p+1}(p+1)+B_{p+1}(p)-2B_{p+1}}{2p(p+1)}-\frac{(p^{p-1}+p-1)}{2}\equiv\frac{(p+1)}{2}\;\;[p],
\end{equation}
yielding to the congruence property for Bernoulli polynomials
\begin{equation}
\frac{B_{p+1}(p+1)+B_{p+1}(p)-2B_{p+1}}{p(p+1)}\equiv p^{p-1}\;\;[p],
\end{equation}
which completes the proof.

\end{proof}

For instance, one gets
\begin{equation}
\mathscr{T}(3)=\sum_{k=1}^{2}\left\lfloor\frac{k^3}{3}\right\rfloor=2
\end{equation}
and 

\begin{equation}
\mathscr{T}(5)=\sum_{k=1}^{4}\left\lfloor\frac{k^5}{5}\right\rfloor=258,
\end{equation}
as well as
\begin{equation}
\mathscr{T}(7)=\sum_{k=1}^{6}\left\lfloor\frac{k^7}{7}\right\rfloor=10(3\times 7^4-6\times 7^3+5\times 7^2-2\times 7+6)=53820.
\end{equation}
For $p$=3, we have
\begin{equation}
\frac{p+1}{2}=2,  
\end{equation}
and clearly $2\equiv 2\;\;[3]$. For $p$=5, we have
\begin{equation}
\frac{p+1}{2}=3,  
\end{equation}
and $258\equiv 3\;\;[5]$. Finally, for $p$=7, we have
\begin{equation}
\frac{p+1}{2}=4,  
\end{equation}
and $53820\equiv 4\;\;[7]$.

\section{Conclusion}

We established a congruence property for a family of sums involving prime numbers, leading to a general congruence relation for Bernoulli numbers and polynomials. We hope that the method presented here will give rise to the derivation of further properties and sum rules.

\end{document}